\documentclass[oneside,11pt,reqno]{amsart}
\usepackage{amssymb,amsmath,amsthm,bbm,enumerate,mdwlist,url,multirow,hyperref,amsthm}
\usepackage[pdftex]{graphicx}
\usepackage[shortlabels]{enumitem}

\addtolength{\hoffset}{-1.5cm}
\textwidth 16.5cm
\addtolength{\textheight}{2cm}
\topmargin -0.4cm
\sloppy

\linespread{1.3}
\theoremstyle{definition}
\newtheorem{definition}{Definition}
\theoremstyle{theorem}
\newtheorem{proposition}[definition]{Proposition}

\newtheorem{corollary}[definition]{Corollary}

\numberwithin{equation}{section}
\theoremstyle{remark}
\newtheorem{remark}[definition]{Remark}
\def\PP{\mathsf P}

\def\EE{\mathsf E}

\def\FF{\mathcal F}

\def\diffusion{\sigma^2}
\def\drift{\gamma_0}
\def\measure{\nu}
\def\supp{\mathrm{supp}}

\def\law{\lambda}
\def\defeq{\buildrel \text{d{}ef}\over =}
\begin{document}
\title{Proper two-sided exits of a L\'evy process}

\author{Matija Vidmar}
\address{Department of Mathematics, University of Ljubljana and Institute of Mathematics, Physics and Mechanics, Slovenia}
\email{matija.vidmar@fmf.uni-lj.si}

\begin{abstract}
It is proved that the two-sided exits of a L\'evy process  are proper, i.e. not a.s. equal to 
their one-sided counterparts, if and only if said process is not a subordinator or the negative of a subordinator. Furthermore, L\'evy processes are characterized, for which the supports of the first exit times from bounded annuli, simultaneously on each of the two events corresponding to exit at the lower and the upper boundary, respectively are unbounded, contain $0$, are equal to $[0,\infty)$. 
\end{abstract}


\keywords{L\'evy process; two-sided exit; support of measure}

\subjclass[2010]{Primary: 60G51; Secondary: 60J75} 

\maketitle

\section{Introduction}
Two-sided exits of spectrally one-sided L\'evy processes have been extensively studied, e.g.  \cite[Chapter~VII]{bertoin} \cite[Section~9.46]{sato} \cite[Section~8.2]{kyprianou}, to which is added a great number of scientific papers. Less is known in the general case -- though  integral transforms of many relevant quantities still admit an analytic representation \cite{kandakov}. But the expressions entering these transforms are complicated, and in particular do not lend themselves easily to analysis. The study of the qualitative aspects of the two-sided exit problem, at least for the case of a general L\'evy process, thus appears relevant. 

Such study is clearly connected with that of the distributional properties of the running supremum $\overline{\vert X\vert}$ of the absolute value $\vert X\vert$ of a L\'evy process $X$. However -- by contrast to those of the supremum  process $\overline{X}$ of $X$ itself, e.g. \cite{pecherskii_rogozin,rogozin,doney,chaumont} --, few such properties appear to have been analyzed in general. There are exceptions, e.g. \cite{simon,aurzada}. 

In a minor contribution to this area, the purpose of the present paper is to characterize those L\'evy processes for which the supports of the first exit times from bounded annuli, simultaneously on each of the two events corresponding to exit at the lower and the upper boundary, are respectively non-empty (Proposition~\ref{proposition:non-one-sided-exit}), contain $0$ (Proposition~\ref{proposition:proper-exit-before}), are unbounded (Proposition~\ref{proposition:proper-exit-after}), are equal to $[0,\infty)$ (Corollary~\ref{corollary:proper-exit-full-support}). Propositions~\ref{proposition:more-precise-one} and~\ref{proposition:more-precise-two} give a further description of the cases when, respectively, the second and third of the preceding properties fails, but the first does not. 

In terms of practical relevance note that L\'evy processes are often used to model the risk process of an insurance company \cite[Paragraph~1.3.1 \& Chapter~7]{kyprianou}, or their exponentials are 
used to model the price fluctuations of stocks \cite[Paragraph~2.7.3]{kyprianou}. Thus, for example, it may be useful to know whether or not  (in both cases possibly before or after some time, or in each non-degenerate time interval) (i) an insurer with given initial capital will in fact go bankrupt or its capital will exceed some given level, each with a positive probability; or (ii) a perpetual two-sided barrier option will terminate with a positive probability on each of the two boundaries. 

\section{Setting and notation}
Throughout we will let $X$ be a L\'evy process \cite[Section~1]{sato} on the stochastic basis $(\Omega,\FF,\mathbb{F},\PP)$ (it is assumed then that $X$ is $\mathbb{F}$-adapted) satisfying the standard assumptions, with diffusion coefficient $\diffusion$, L\'evy measure $\measure$ and, when $\int 1\land \vert x\vert\measure(dx)<\infty$, drift $\drift$ \cite[Section~8]{sato}.

\begin{definition}[Two-sided exit times and their laws]
Let $\{a,b\}\subset (0,\infty)$. 
\begin{enumerate}[(i)]
\item For a  c\`adl\`ag path $\omega$ mapping $[0,\infty)$ into $\mathbb{R}$, vanishing at zero, we denote by $T_{a,b}(\omega)$ the first entrance time of $\omega$ into the set $\mathbb{R}\backslash (-b,a)$ (i.e. the first exit time of $\omega$ from $(-b,a)$). 
\item We introduce the measures $\law_{a,b}^+$ and $\law_{a,b}^-$ on $\mathcal{B}([0,\infty))$, so that for $A\in \mathcal{B}([0,\infty))$, $\law_{a,b}^-(A)\defeq\PP(\{T_{a,b}(X)<\infty\}\cap \{X_{T_{a,b}(X)}\leq -b\}\cap \{T_{a,b}(X)\in A\})$ and $\law_{a,b}^+(A)\defeq\PP(\{T_{a,b}(X)<\infty\}\cap \{X_{T_{a,b}(X)}\geq a\}\cap \{T_{a,b}(X)\in A\})$.
\end{enumerate}
\end{definition}
We shall be concerned then with characterizing the pairs $(\diffusion,\measure)$ and, when $\int1\land \vert x\vert\measure(dx)<\infty$, further the drifts $\drift$, under which the measures $\law^\pm_{a,b}$ are non-vanishing (on each non-empty interval of the form $[0,M)$, $[m,\infty)$, respectively $[m,M)$). 

\begin{definition}[Auxiliary notions/notation]
For a time $S:\Omega\to [0,\infty]$ we will call $(X(S+t)-X(S))_{t\geq 0}$  (defined on $\{S<\infty\}$) the incremental process of $X$ after $S$. For a measure $\rho$ on a topological space, $\supp(\rho)$ will be its support. Finally, $a\land b:=\min\{a,b\}$ (when $\{a,b\}\subset [-\infty,+\infty]$): for measurable sets $A$ and $B$ and a measure $\lambda$, $\lambda A\land \lambda B>0$ is thus shorthand for ``$\lambda(A)>0$ and $\lambda(B)>0$.''
\end{definition}

\section{Results}
Now the precise statements follow.

\begin{proposition}\label{proposition:non-one-sided-exit}
$\law_{a,b}^+[0,\infty)\land \law_{a,b}^-[0,\infty)>0$ for some (then all) $\{a,b\}\subset (0,\infty)$, if and only if 
\begin{quote}
$\diffusion>0$; or $\int\vert x\vert\land 1\measure(dx)=\infty$; or $\nu$ charges $(-\infty,0)$ and $(0,\infty)$ both; or else $\nu$ charges only (and does charge) $(0,\infty)$ and $\drift<0$, or $\nu$ charges only (and does charge) $(-\infty,0)$ and $\drift>0$,
\end{quote}
 i.e. if and only if  neither $X$ nor $-X$ is a subordinator. 
\end{proposition}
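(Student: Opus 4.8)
The plan is to split the equivalence into an analytic bookkeeping step and a probabilistic one. First I would record that the displayed list of conditions on $(\diffusion,\measure,\drift)$ is exactly the negation of ``$X$ or $-X$ is a subordinator'': by the L\'evy--It\^o description, $X$ is a subordinator iff $\diffusion=0$, $\measure$ is carried by $(0,\infty)$, $\int 1\land\vert x\vert\,\measure(dx)<\infty$ and $\drift\ge 0$, with the symmetric statement for $-X$. Checking that the five alternatives in the quote are precisely the complement of ``$X$ or $-X$ is a subordinator'' is then a routine case distinction; the only delicate degenerate point is $\measure=0=\diffusion$, where $X=\drift\cdot\mathrm{id}$ is always $\pm$ a subordinator and is correctly excluded by all five alternatives. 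Since this criterion does not involve $a,b$, the ``for some $\iff$ for all'' clause needs no separate argument: I prove the equivalence for each fixed $\{a,b\}\subset(0,\infty)$, and independence of the criterion from $(a,b)$ delivers the rest.

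For the easy (``only if'') direction I use monotonicity of paths: if $X$ is a subordinator its paths are nondecreasing, so $X$ can never exit $(-b,a)$ at the lower boundary and $\law^-_{a,b}[0,\infty)=0$; symmetrically, if $-X$ is a subordinator then $\law^+_{a,b}[0,\infty)=0$. Hence $\law^+_{a,b}[0,\infty)\land\law^-_{a,b}[0,\infty)>0$ forces neither $X$ nor $-X$ to be a subordinator. For the converse, the symmetry $X\leftrightarrow -X$ (which interchanges $\law^+_{a,b}$ with $\law^-_{b,a}$) reduces everything to the single implication: if $-X$ is not a subordinator, then $\law^+_{a,b}[0,\infty)>0$ for every $\{a,b\}$; applying this to $-X$ then gives $\law^-_{a,b}[0,\infty)>0$ whenever $X$ is not a subordinator, so that if neither is a subordinator both quantities are positive.

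To prove that implication I would exhibit, in each case into which ``$-X$ not a subordinator'' splits, a positive-probability event on which $X$ reaches $[a,\infty)$ while having stayed in $(-b,\infty)$ up to that moment (so that $T_{a,b}<\infty$ with exit at the top). I decompose $X=Y+Z$ into independent pieces and use that, for any L\'evy process $Z$ with $Z_0=0$, right-continuity gives $\PP(\sup_{s\le\epsilon}\vert Z_s\vert<\eta)\to 1$ as $\epsilon\downarrow0$. Concretely: (a) if $\diffusion>0$, take $Y$ the Brownian part; a Brownian path climbs above $a+b/2$ while staying above $-b/2$ on $[0,\epsilon]$ with positive probability, and on the independent event $\{\sup_{s\le\epsilon}\vert Z_s\vert<b/2\}$ the sum stays above $-b$ and exceeds $a$. (b) If $\diffusion=0$ but $\measure((0,\infty))>0$, pick $\delta>0$ with $\measure((\delta,\infty))\in(0,\infty)$, let $Y$ be the nondecreasing compound Poisson process of jumps exceeding $\delta$, and intersect $\{\sup_{s\le\epsilon}\vert Z_s\vert<b/2\}$ with the event that at least $n:=\lceil(a+b/2)/\delta\rceil$ such jumps occur on $[0,\epsilon]$; as $Y\ge0$ throughout, $X$ stays above $-b/2$ and, after the $n$-th jump, exceeds $a$. (d) If $X$ has finite variation, no positive jumps and $\drift>0$, write $X_t=\drift\,t-S_t$ with $S$ a driftless subordinator; on $\{S_\epsilon<\min(a,b)\}$ with $\epsilon=2a/\drift$ one has $\sup_{s\le\epsilon}S_s=S_\epsilon$, so $X$ stays above $-b$ and ends above $a$, crossing $a$ continuously, and driftlessness gives $0\in\overline{\supp(S_\epsilon)}$, whence $\PP(S_\epsilon<\min(a,b))>0$.

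The main obstacle is the remaining case (c): $\diffusion=0$, no positive jumps, but $\int 1\land\vert x\vert\,\measure(dx)=\infty$, i.e.\ $X$ spectrally negative of infinite variation. Here $X$ can cross $a$ only by creeping, and the ``one short burst'' constructions above break down, since there is no finite drift to carry the path up and the only jumps point the wrong way. I would settle this with the spectrally one-sided exit theory referenced in the introduction: the scale function $W$ is continuous, strictly positive and increasing on $(0,\infty)$, and $\law^+_{a,b}[0,\infty)=\PP(\tau^+_a<\tau^-_{-b})=W(b)/W(a+b)>0$. Alternatively one argues directly that infinite variation forces $0$ to be regular for $(0,\infty)$ and combines upward creeping with the strong Markov property, but invoking $W$ is cleanest. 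Assembling (a)--(d), and combining with the symmetry reduction and the bookkeeping of the first paragraph, proves the proposition.
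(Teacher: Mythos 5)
Your proposal is correct, but it follows a genuinely different route from the paper. The paper's sufficiency argument is indirect and uniform: assuming \emph{per absurdum} that the exit happens a.s.\ at (say) the upper boundary, it iterates the strong Markov property to produce an iid sequence of exit times, invokes the strong law of large numbers to conclude that $X$ would then stay above $-b$ for \emph{all} time a.s., and derives a contradiction from Sato's support theorems (Theorem~24.7 forces $\diffusion=0$, $\int 1\land\vert x\vert\,\measure(dx)<\infty$ and $\measure$ concentrated on $(0,\infty)$; then $\drift<0$ contradicts Corollary~24.8, which says $\inf\supp(X_t)=\drift t$). No case analysis and no fluctuation theory is needed. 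You instead argue directly: after the (correct) symmetry reduction $\law^+_{a,b}(X)=\law^-_{b,a}(-X)$, you build explicit positive-probability exit events in four cases via the L\'evy--It\^o decomposition and small-time estimates $\PP(\sup_{s\le\epsilon}\vert Z_s\vert<\eta)\to 1$. Cases (a), (b), (d) are elementary and sound (note that your case (d) quietly uses the same Sato support result, $0\in\overline{\supp(S_\epsilon)}$ for a driftless subordinator, that drives the paper's contradiction), but your case (c) --- spectrally negative with infinite variation, which you rightly identify as the one where ``short burst'' constructions fail --- imports the scale-function identity $\PP(\tau^+_a<\tau^-_{-b})=W(b)/W(a+b)$, a substantially heavier external tool than anything the paper uses; the paper's SLLN argument absorbs this case with no extra machinery. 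What your approach buys in exchange: it is constructive, it isolates the exit mechanism in each regime (and in cases (a), (b) shows the exit can occur in arbitrarily small time, foreshadowing Proposition~\ref{proposition:proper-exit-before}), and it establishes the finer one-sided statement that $\law^+_{a,b}>0$ already when merely $-X$ fails to be a subordinator.
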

\begin{proof}
For the last equivalence see \cite[p. 137, Theorem~21.5]{sato}. The condition is clearly necessary. 

Sufficiency. Let $\{a,b\}\subset (0,\infty)$. The condition implies $X$ is not the zero process, so that $\limsup_\infty X=\infty$ or $\liminf_{-\infty}X=-\infty$ a.s. \cite[p. 255, Proposition~37.10]{sato}, and so a.s. $T_{a,b}(X)<\infty$. Suppose furthermore \emph{per absurdum}, and then without loss of generality, that a.s. $X_{T_{a,b}(X)}\geq a$. Let $X^{(0)}\defeq X$,  $T^{(0)}\defeq T_{a,b}(X)$. Then by the strong Markov property \cite[p. 278, Theorem~40.10]{sato} of L\'evy processes, inductively, we would find that a.s. for all $k\in \mathbb{N}_0$ the incremental process $X^{(k+1)}$ of $X$ after $T^{(k)}$ would satisfy $X^{(k+1)}(T^{(k+1)})\geq a$, where $T^{(k+1)}\defeq T_{a,b}(X^{(k+1)})$ would be equal in distribution to $T^{(0)}$ and independent of $\FF_{T^{(k)}}$. In particular, since by the right-continuity of the sample paths $\EE T^{(0)}>0$ (indeed $T^{(0)}>0$ a.s.), and since $(T^{(k)})_{k\in \mathbb{N}_0}$ is an iid sequence, it would follow from the strong law of large numbers, that with probability one $X$ would be $> -b$ at all times. According to \cite[p. 149, Theorem~24.7]{sato} this would only be possible if $\diffusion=0$, $\int\vert x\vert\land 1\measure(dx)<\infty$ with $\nu$ charging only $(0,\infty)$. Then according to the assumed condition we would need to have $\drift<0$, yielding a contradiction with \cite[p. 151, Corollary~24.8]{sato}, which necessitates the infimum of the support of $X_t$ being $\gamma_0 t$, for all $t\in [0,\infty)$, in this case.
\end{proof}

In various subcases, this statement can be made more nuanced. 

\begin{proposition}\label{proposition:proper-exit-before}
The condition that $\law^+_{a,b}[0,M)\land \law^-_{a,b}[0,M)>0$ for all $M\in (0,\infty]$, for some (then all) $\{a,b\}\subset (0,\infty)$, is equivalent to 
\begin{quote}
$\nu$ charges $(-\infty,0)$ and $(0,\infty)$ both; or $\diffusion>0$; or $\int 1\land\vert x\vert\measure(dx)=\infty$.
\end{quote} 
\end{proposition}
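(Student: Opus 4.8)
The property ``$\law^+_{a,b}[0,M)\land\law^-_{a,b}[0,M)>0$ for all $M\in(0,\infty]$'' says precisely that $0\in\supp(\law^+_{a,b})$ and $0\in\supp(\law^-_{a,b})$, i.e. that $X$ can exit $(-b,a)$ through the upper (resp. lower) boundary in arbitrarily short time with positive probability. The plan is to isolate the upper boundary: I would first prove the one-sided criterion that $0\in\supp(\law^+_{a,b})$ for some (then all) $\{a,b\}$ if and only if $X$ is \emph{not} of finite variation with $\measure((0,\infty))=0$, equivalently $\diffusion>0$ or $\int 1\land\vert x\vert\measure(dx)=\infty$ or $\measure((0,\infty))>0$; the analogous statement for $\law^-_{a,b}$ then follows by applying this to $-X$, using $\law^-_{a,b}(X)=\law^+_{b,a}(-X)$ and that $-X$'s positive jumps are $X$'s negative ones. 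Granting these, a short Boolean computation finishes the proof: the conjunction $0\in\supp(\law^+_{a,b})$ and $0\in\supp(\law^-_{a,b})$ fails exactly when $X$ is of finite variation and $\measure$ charges at most one of the half-lines $(-\infty,0)$, $(0,\infty)$, and the negation of this is precisely the displayed condition.

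For necessity of the one-sided criterion, suppose $X$ is of finite variation with $\measure((0,\infty))=0$. Then by the reflection of \cite[p.~151, Corollary~24.8]{sato} the supremum of the support of $X_t$ is $\drift t$, so a.s. $X_t\leq\drift t$ for every $t$; on the event of upper exit $X_{T_{a,b}}\geq a$ this forces $\drift T_{a,b}\geq a$, hence $\drift>0$ and $T_{a,b}\geq a/\drift>0$ there. Thus $\supp(\law^+_{a,b})\subseteq[a/\drift,\infty)$ (and $\law^+_{a,b}\equiv0$ if $\drift\leq0$), so $0\notin\supp(\law^+_{a,b})$.

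For sufficiency it suffices, since the event $\{T_{a,b}<t_0,\ X_{T_{a,b}}\geq a\}$ grows with $t_0$, to produce for every \emph{small} $t_0>0$ a positive-probability event on which $X$ reaches level $a$ before time $t_0$ without having first dropped to $-b$. When $\measure((0,\infty))>0$ I would fix $0<\alpha<\beta$ with $\measure((\alpha,\beta))>0$, write $X=Y+J$ where $J$ collects the (independent) jumps of size in $(\alpha,\beta)$, and intersect the two independent events that $J$ makes exactly $k:=\lceil (a+1)/\alpha\rceil$ jumps on $[0,t_0]$ and that $\sup_{s\leq t_0}\vert Y_s\vert<\delta$, where $\delta<\min\{1,b\}$. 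On this event $X_t\geq Y_t>-\delta>-b$ throughout, while after the $k$ jumps $X>k\alpha-\delta>a$, so the first exit occurs upward before $t_0$; both probabilities are positive (the second tends to $1$ as $t_0\downarrow0$ by stochastic continuity). The case $\diffusion>0$ is identical, with $J$ replaced by the Brownian part $\diffusion^{1/2}W$ and the jump-count event by a controlled upward Brownian excursion reaching $a+\delta$ before time $t_0$ and before hitting $-(b-\delta)$.

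The remaining, and genuinely delicate, case is $\diffusion=0$, $\measure((0,\infty))=0$ and $\int 1\land\vert x\vert\measure(dx)=\infty$: here $X$ is spectrally negative of infinite variation, has no positive jumps to exploit, and reaches $a$ only by creeping continuously upward, so the main obstacle is to reach $a$ \emph{quickly} while keeping the running infimum above $-b$. I would invoke the fluctuation theory of spectrally negative L\'evy processes: the first-passage times $(\tau^+_x)_{x\geq0}$, $\tau^+_x:=\inf\{t:X_t>x\}$, form a subordinator with Laplace exponent $\Phi=\psi^{-1}$ (the inverse of the Laplace exponent $\psi$ of $X$), whose drift $\lim_{q\to\infty}\Phi(q)/q$ vanishes because infinite variation gives $\psi(\theta)/\theta\to\infty$; a driftless subordinator of infinite activity has $\inf\supp(\tau^+_a)=0$, so $X$ does reach $a$ in arbitrarily short time with positive probability. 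To simultaneously control the infimum I would use the excursion decomposition of $X$ reflected at its supremum: the excursions below the running maximum on the local-time interval $[0,a]$ form a Poisson point process, and splitting it into the independent families of excursions of depth $\geq b$ and of depth $<b$, one intersects the event that no excursion has depth $\geq b$ with the event that the total length of the shallow excursions is $<t_0$ (again positive, that length being a driftless subordinator); on this intersection $X$ stays above $-b$ and reaches $a$ before $t_0$. Equivalently, and perhaps more cleanly, one may read the conclusion off the explicit transform $\int_{[0,\infty)}e^{-qt}\law^+_{a,b}(dt)=W^{(q)}(b)/W^{(q)}(a+b)$ together with $\inf\supp(\law^+_{a,b})=\lim_{q\to\infty}q^{-1}\log\bigl(W^{(q)}(a+b)/W^{(q)}(b)\bigr)$ and the asymptotics $q^{-1}\log W^{(q)}(x)\to0$ governed by $\Phi(q)/q\to0$.
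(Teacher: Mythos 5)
Your proposal is correct, but it takes a genuinely different route from the paper's. You split the statement into two one-sided criteria --- $0\in\supp(\law^+_{a,b})$ if and only if $\diffusion>0$, or $\int 1\land\vert x\vert\measure(dx)=\infty$, or $\measure(0,\infty)>0$, together with its mirror image for $\law^-_{a,b}$ --- and recombine them by Boolean algebra; the paper never isolates the two boundaries. Its sufficiency proof is a single soft argument: assume \emph{per absurdum} (without loss of generality) that exit before time $M$ is a.s.\ at the upper boundary; iterate via the strong Markov property to produce an iid sequence of exit cycles; the strong law of large numbers shows these cycles exhaust $[0,M)$, whence a.s.\ $X>-b$ on $[0,M)$; Sato's support theorem then forces $\diffusion=0$, $\int 1\land\vert x\vert\measure(dx)<\infty$ and $\measure$ charging only $(0,\infty)$, contradicting the hypothesis. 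That one argument covers all cases uniformly --- in particular the spectrally one-sided infinite-variation case, which is exactly where your proof must invoke the heaviest machinery (the first-passage subordinator with vanishing drift, the excursion decomposition below the maximum, or the scale-function asymptotics for $W^{(q)}(b)/W^{(q)}(a+b)$). Your constructions in the other two cases (piling up $k$ moderate jumps while the independent remainder stays uniformly small; a Brownian crossing plus a small remainder) are sound, and your necessity argument via the reflected \cite[p.~151, Corollary~24.8]{sato} parallels the paper's necessity step. What your route buys is a strictly finer conclusion --- a characterization of each one-sided support separately, plus quantitative information such as $\supp(\law^+_{a,b})\subseteq[a/\drift,\infty)$ in the degenerate case (compare Proposition~\ref{proposition:more-precise-one}) --- at the cost of a case analysis and of fluctuation/excursion theory for spectrally negative processes, several ingredients of which (finiteness of the excursion measure of excursions of depth $\geq b$, driftlessness of the inverse local time and of its thinned version, the stated $W^{(q)}$ asymptotics) you assert rather than prove, though all are standard and citable. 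The paper's route buys brevity and uniformity, proving exactly the two-sided statement and nothing more.
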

\begin{proof}
The condition is necessary. For, if $\diffusion=0$, $\int 1\land \vert x\vert<\infty$ and, say, $\nu$ charges only $(0,\infty)$, then in order that $X$ not have monotone paths, it will need to assume a strictly negative drift, but even then, according to the L\'evy-It\^o decomposition \cite[Section~2.4]{applebaum}, for given $a$ and $b$, $M$ can clearly be chosen so small, that by time $M$, a.s. $X$ can only have left $(-b,a)$ at the upper boundary. 

Sufficiency. The argument is similar as in the proof of the preceding proposition, so we forego explicating some of the details. Let $\{a,b\}\subset (0,\infty)$, $M\in (0,\infty)$. Suppose \emph{per absurdum}, and then without loss of generality, that $X_{T_{a,b}(X)}\geq a$ a.s. on $\{T_{a,b}(X)< M\}$. Let $X^{(0)}\defeq X$,  $T^{(0)}\defeq T_{a,b}(X)$. By the strong Markov property of L\'evy processes, inductively, we find that a.s. for all $k\in \mathbb{N}_0$ the incremental process $X^{(k+1)}$ of $X$ after $T^{(k)}$ satisfies $X^{(k+1)}(T^{(k+1)})\geq a$ on $\{T^{(k+1)}< M\}$, where $T^{(k+1)}\defeq T_{a,b}(X^{(k+1)})$ is equal in distribution to $T^{(0)}$ and independent of $\FF_{T^{(k)}}$. From the strong law of large numbers, it now follows, that with probability one $X$ is $> -b$ on $[0,M)$. But this is only possibly if $\diffusion=0$, $\int\vert x\vert\land 1\measure(dx)<\infty$ with $\nu$ charging only $(0,\infty)$, a contradiction.
\end{proof}
The situation when $X$ satisfies the condition of Proposition~\ref{proposition:non-one-sided-exit} but not that of Proposition~\ref{proposition:proper-exit-before} can (up to the trivial transformation $X\rightarrow -X$) easily be described as follows (in particular, in Proposition~\ref{proposition:proper-exit-before}, we cannot change the qualification ``for all $M\in (0,\infty]$, for some (then all) $\{a,b\}\subset (0,\infty)$'' to ``for some (then all) $M\in (0,\infty]$ and $\{a,b\}\subset (0,\infty)$''):
\begin{proposition}\label{proposition:more-precise-one}
Suppose $\int 1\land\vert x\vert\measure(dx)<\infty$, $\diffusion=0$ and $\nu$ charges only (and does charge) $(-\infty,0)$, finally $\gamma_0>0$. Let furthermore $M\in (0,\infty]$, $\{a,b\}\subset (0,\infty)$. Then $\law^+_{a,b}[0,M)\land \law^-_{a,b}[0,M)>0$, if and only if $a/\gamma_0<M$. 
\end{proposition}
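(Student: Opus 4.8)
The plan is to read everything off the pathwise structure that the hypotheses impose. Because $\diffusion=0$, $\int 1\land\vert x\vert\,\measure(dx)<\infty$ and $\measure$ is carried by $(-\infty,0)$, the L\'evy--It\^o decomposition \cite[Section~2.4]{applebaum} writes $X_t=\gamma_0 t-D_t$, where $D_t\defeq-\sum_{s\le t}\Delta X_s\ge 0$ is a driftless subordinator whose L\'evy measure is the reflection of $\measure$ onto $(0,\infty)$; since $\measure$ genuinely charges $(-\infty,0)$, this subordinator is not identically zero. Two consequences drive the argument: (a) $X_t\le \gamma_0 t$ for all $t\ge 0$; and (b) $X$ has no positive jumps, so it can cross the upper level $a$ only continuously, whence on $\{X_{T_{a,b}(X)}\ge a\}$ one has $X_{T_{a,b}(X)}=a$ and, by (a), $T_{a,b}(X)\ge a/\gamma_0$.

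For necessity I would argue by contraposition. If $M\le a/\gamma_0$, then by the previous sentence any upper exit occurs at a time $\ge a/\gamma_0\ge M$, so $\{T_{a,b}(X)<M\}\cap\{X_{T_{a,b}(X)}\ge a\}=\emptyset$ and $\law^+_{a,b}[0,M)=0$; hence the conjunction fails. This already forces $a/\gamma_0<M$ whenever $\law^+_{a,b}[0,M)\land\law^-_{a,b}[0,M)>0$.

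For sufficiency, assume $a/\gamma_0<M$ and treat the two boundaries separately, in each case producing an explicit positive-probability family of paths. For the upper exit, fix $\epsilon\in(0,\min\{b,\gamma_0 M-a\})$ (a nonempty range since $M>a/\gamma_0$) and work on the event $\{D_M<\epsilon\}$: there $X_t=\gamma_0 t-D_t>\gamma_0 t-\epsilon>-b$ throughout $[0,M)$, so no lower exit occurs, while $X_t\ge\gamma_0 t-\epsilon$ reaches $a$ by time $(a+\epsilon)/\gamma_0<M$; by creeping this is a genuine upper exit strictly before $M$, so $\law^+_{a,b}[0,M)\ge\PP(D_M<\epsilon)$. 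For the lower exit, fix $h\defeq a/(2\gamma_0)<a/\gamma_0<M$ and work on $\{D_h\ge b+\gamma_0 h\}$: since $X_s\le\gamma_0 s\le\gamma_0 h<a$ for $s\le h$, the path never reaches $a$ on $[0,h]$, whereas $X_h=\gamma_0 h-D_h\le-b$, so (downward jumps being the only mechanism of descent) $X$ makes a lower exit at some time $\le h<M$, giving $\law^-_{a,b}[0,M)\ge\PP(D_h\ge b+\gamma_0 h)$.

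It remains to see that both probabilities are strictly positive, and this is the one quantitative input I would isolate as the crux: for the nonzero driftless subordinator $D$ one has $\PP(D_t<\epsilon)>0$ and $\PP(D_t\ge\ell)>0$ for every $t,\epsilon,\ell>0$, i.e. the support of the law of $D_t$ is exactly $[0,\infty)$ (its infimum is the drift times $t$, namely $0$, and its supremum is $+\infty$ because the reflected measure is nonzero). I would obtain this from the support theorem for infinitely divisible laws \cite[p.~149, Theorem~24.7; p.~151, Corollary~24.8]{sato} applied to $D$, the bound on $\PP(D_t<\epsilon)$ additionally using finite variation to keep the small-jump part small, and the bound on $\PP(D_t\ge\ell)$ using that arbitrarily many jumps exceeding a fixed $\delta>0$ (with $\measure(-\infty,-\delta)>0$) occur in $[0,t]$ with positive probability. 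The main obstacle is thus not conceptual but lies in citing or deriving these marginal support facts cleanly; once they are in hand, the two displayed inequalities close the proof.
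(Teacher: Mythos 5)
Your proposal is correct in substance and, for the infinite-activity case, uses essentially the same events as the paper; its merit is that it treats finite and infinite $\measure$ uniformly. The paper's sufficiency argument splits into two cases: $\measure$ finite (explicit counting of jumps of size $<-\epsilon$, with positive probability either $\lceil (a+b)/\epsilon\rceil$ such jumps before $a/\gamma_0$, or no jump at all up to $a/\gamma_0$) and $\measure$ infinite (where \cite[p.~152, Theorem~24.10]{sato} gives that the jump part has support $(-\infty,0]$ at every positive time). Your reduction of both exits to two marginal facts about the subordinator $D_t=\gamma_0 t-X_t$, namely $\PP(D_t<\epsilon)>0$ and $\PP(D_t\geq \ell)>0$ for finite $t$, removes this case split: Sato's Corollary~24.8 gives $\inf \supp(P_{D_t})=0$ whether or not $\measure$ is finite, and the many-large-jumps argument gives unboundedness of the support. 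This is a genuine, if modest, streamlining of the bookkeeping, and your necessity argument (no positive jumps, $X_t\leq\gamma_0 t$, hence no upper exit strictly before $a/\gamma_0$) is the same observation the paper dismisses as ``clearly necessary.''

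Two caveats, one of which is an actual gap. The statement allows $M=\infty$, and there your upper-exit bound is vacuous: a nonzero subordinator satisfies $D_t\to\infty$ a.s., so $\PP(D_M<\epsilon)=0$ when $M=\infty$, and the inequality $\law^+_{a,b}[0,M)\geq \PP(D_M<\epsilon)$ proves nothing. You must first reduce to finite $M$, either by monotonicity (pick finite $M'\in(a/\gamma_0,M)$ and use $\law^{\pm}_{a,b}[0,M)\geq\law^{\pm}_{a,b}[0,M')$) or, as the paper does, by invoking Proposition~\ref{proposition:non-one-sided-exit} to dispose of $M=\infty$ outright; the fix is one line, but as written the argument fails in that case. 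Second, your parenthetical ``i.e.\ the support of the law of $D_t$ is exactly $[0,\infty)$'' is false in general: if $D$ is a standard Poisson process, the support of $D_t$ is $\mathbb{N}_0$, so having infimum $0$ and being unbounded does not make the support all of $[0,\infty)$. This slip is harmless here, since the only facts your two displayed inequalities need are precisely $\PP(D_t<\epsilon)>0$ and $\PP(D_t\geq\ell)>0$, both of which are true and correctly justified, but the ``i.e.'' should be deleted.
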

\begin{proof}
The condition is clearly necessary. Sufficiency. In view of Proposition~\ref{proposition:non-one-sided-exit}, we may assume $M<\infty$. If $\measure$ is finite, note that there is $\epsilon \in (0,\infty)$ with $\nu(-\infty,-\epsilon)>0$, and then the desired conclusion follows from the fact that with positive probability the process $X$ will have $\lceil (a+b)/\epsilon\rceil$ many jumps of size $<-\epsilon$ before $a/\gamma_0$, going below $-b$ (but not above $a$) strictly before time $M$, but with positive probability will also not have a jump up to, reaching level $a$ at, time $a/\gamma_0<M$. If $\measure$ is infinite, it follows simply from the fact that the support of the jump part of $X$ will be $(-\infty,0]$ at all strictly positive times \cite[p. 152, Theorem~24.10]{sato}: thus, with a positive probability, at time $a/\gamma_0+(M-a/\gamma_0)/2$, say, the jump part will not have gone below the level $-(((M\gamma_0-a)/2)\land b)$, and hence $X$ not below the level $-b$, but $X$ will of course have gone above the level $a$ by this time; in the same vein, with a positive probability, at time $a/(2\gamma_0)$, say, the jump part of $X$ will have gone below $-(b+a/2)$, and so $X$ below the level $-b$, but $X$ will not have gone above level $a$ by that time.
\end{proof}
For the next few statements let us borrow the following result from \cite[Proposition~1.1]{aurzada} (that itself refers to \cite{simon}):

\begin{proposition}\label{proposition:no-exit-in-bdd-time}
Let $\overline{\vert X \vert}$ be the running supremum process of the absolute value process of $X$. Then for some $M\in (0,\infty)$ and then all $\epsilon\in (0,\infty)$ $\PP(\overline{\vert X\vert}_M< \epsilon)>0$,  if and only if
\begin{quote}
$\int 1\land \vert x\vert \measure(dx)<\infty$ and $(\drift=0)\lor (\drift>0\land 0\in \supp(\measure\vert_{(-\infty,0]}))\lor (\drift <0\land 0\in \supp(\measure\vert_{[0,\infty)}))$; or $\diffusion>0$; or $\int 1\land\vert x\vert\measure(dx)=\infty$.
\end{quote}
When so, then $\PP(\overline{\vert X\vert}_M< \epsilon)>0$ whenever $\{M,\epsilon\}\subset (0,\infty)$.
\end{proposition}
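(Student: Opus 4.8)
The plan is to read the event $\{\overline{\vert X\vert}_M<\epsilon\}$ as $\{T_{\epsilon,\epsilon}(X)>M\}$, i.e. as the event that $X$ has not yet left the symmetric tube $(-\epsilon,\epsilon)$ by time $M$, and to recognise the property ``$\PP(\overline{\vert X\vert}_M<\epsilon)>0$ for all $\epsilon\in(0,\infty)$'' as the assertion that the zero path lies in the support of the law of $(X_t)_{t\in[0,M]}$ for the topology of uniform convergence. Writing $f(M,\epsilon):=\PP(\overline{\vert X\vert}_M<\epsilon)$, the first and purely formal step is the estimate
\begin{equation*}
f(s+t,\epsilon)\ge f(s,\epsilon/2)\,f(t,\epsilon/2),\qquad \{s,t,\epsilon\}\subset(0,\infty),
\end{equation*}
obtained by intersecting $\{\overline{\vert X\vert}_s<\epsilon/2\}$ with $\{\sup_{u\in[s,s+t]}\vert X_u-X_s\vert<\epsilon/2\}$ and invoking the Markov property together with the stationarity and independence of the increments. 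Since $f$ is nonincreasing in $M$ and nondecreasing in $\epsilon$, this super-multiplicativity upgrades ``$f(M_0,\cdot)>0$ on $(0,\infty)$ for some $M_0$'' to ``$f(M,\cdot)>0$ on $(0,\infty)$ for every $M$'' (induct to obtain all integer multiples of $M_0$, then use monotonicity in $M$), which is exactly the final sentence of the proposition. It therefore suffices to prove the equivalence for one conveniently chosen $M$.

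Necessity I would settle by hand. If the displayed condition fails then $\diffusion=0$, $\int1\land\vert x\vert\measure(dx)<\infty$, and $X_t=\drift t+\sum_{s\le t}\Delta X_s$ with $\drift\ne0$; by the symmetry $X\mapsto-X$ I take $\drift>0$, the failure of the bracketed clause then furnishing a $\delta>0$ with $\measure((-\delta,0))=0$. Fix $\epsilon<\delta/2$. On $\{\overline{\vert X\vert}_M<\epsilon\}$ a downward jump is impossible, since from a level in $(-\epsilon,\epsilon)$ any jump of size $\le-\delta$ lands below $\epsilon-\delta<-\epsilon$; hence on that event $X$ is nondecreasing, so $X_t\ge\drift t$, forcing $\drift M\le\epsilon$. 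Consequently $f(M,\epsilon)=0$ as soon as $\epsilon<\min\{\delta/2,\drift M\}$, and no single $M$ can serve all $\epsilon$, so the condition is necessary.

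For sufficiency I would fix an arbitrary $M$ and, for each $\epsilon$, split off the jumps of magnitude $\ge r$ (a compound Poisson process of finite rate, absent on $[0,M]$ with positive probability) and argue on the complementary small-jump process. When $\diffusion>0$ the latter is the sum of a Brownian motion with drift and an independent compensated-small-jump martingale; the Gaussian-plus-drift part has positive small-ball probability (Cameron--Martin reduces this to the classical positivity for Brownian motion, which is insensitive to a finite drift), while the jump martingale is held below $\epsilon/2$ with positive probability by Doob's $L^2$ inequality once $r$ is small, so the product is positive. When $\diffusion=0$ and $\int1\land\vert x\vert\measure(dx)<\infty$ with $\drift=0$, the small-jump process is dominated in absolute value by its total-variation subordinator $V$, and $\PP(V_M<\epsilon)>0$ for $r$ small because $\EE V_M=M\int_{\{\vert x\vert<r\}}\vert x\vert\measure(dx)\to0$.

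The remaining, and genuinely hardest, cases are $\diffusion=0$ with either infinite variation or finite variation and $\drift\ne0$ offset by oppositely signed small jumps; here one needs a small-ball \emph{lower} bound for small $\epsilon$, which the preceding $L^2$ estimates do not deliver. My plan is to obtain it from the support theorem for L\'evy processes (in the vein of \cite{simon}, which underlies \cite{aurzada}): the support of the path law is the uniform closure of the paths built from the drift, the Gaussian directions, and finitely many jumps valued in $\supp(\measure)$, and one checks that the zero path is such a limit precisely under the stated condition. Concretely, in the finite-variation case with $\drift>0$ and $0\in\supp(\measure\vert_{(-\infty,0]})$ one would partition $[0,M]$ into subintervals over each of which the drift displaces $X$ by less than $\epsilon/2$ and prescribe, on a positive-probability event for the Poisson measure of jumps, just enough small negative jumps (available precisely because $0\in\supp(\measure\vert_{(-\infty,0]})$) to offset the drift and keep the path inside the tube---the same mechanism, with infinitely many small jumps now automatically at one's disposal, covering the infinite-variation case. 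Making this favourable configuration precise, and controlling the between-jumps fluctuations uniformly in $\epsilon$, is the main obstacle; the elementary reductions above then collapse the whole statement to this single construction for one value of $M$.
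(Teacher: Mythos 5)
The first thing to note is that the paper offers no proof of this proposition at all: it is imported verbatim from \cite[Proposition~1.1]{aurzada}, which in turn rests on \cite{simon}, so the comparison here is really between your attempt and that cited literature. Your preparatory work is correct as far as it goes. The super-multiplicative bound $f(s+t,\epsilon)\ge f(s,\epsilon/2)f(t,\epsilon/2)$, obtained from independence and stationarity of increments, together with monotonicity of $f$, does legitimately upgrade ``for some $M$'' to ``for all $M$''. The necessity argument is sound: if $\diffusion=0$, $\int 1\land\vert x\vert\measure(dx)<\infty$, $\drift>0$ and $\measure((-\delta,0))=0$, then on $\{\overline{\vert X\vert}_M<\epsilon\}$ with $\epsilon<\delta/2$ no negative jump can occur, whence $X_M\ge\drift M$ and $f(M,\epsilon)=0$ as soon as $\epsilon\le\min\{\delta/2,\drift M\}$. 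And sufficiency is correctly disposed of when $\diffusion>0$ (Gaussian small-ball positivity, Doob's $L^2$ bound on the compensated small-jump martingale, large jumps suppressed at Poisson cost) and when $\diffusion=0$, $\int 1\land\vert x\vert\measure(dx)<\infty$, $\drift=0$ (first-moment bound on the total-variation subordinator).

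The gap is the one you yourself flag, and it is not peripheral: in the two remaining cases --- $\diffusion=0$ with $\int 1\land\vert x\vert\measure(dx)=\infty$, and $\diffusion=0$, finite variation, $\drift\ne 0$ with oppositely-signed jumps accumulating at $0$ --- you give no proof, only a plan, conceding that making the favourable jump configuration precise ``is the main obstacle.'' But these two cases are precisely the substance of the result the paper borrows from \cite{simon} and \cite{aurzada}; everything you prove rigorously is the routine reduction, and what you leave open is the theorem itself. Appealing at that point to ``the support theorem for L\'evy processes'' is either circular (positivity of these small-ball probabilities is equivalent to the zero path lying in the support, which is what is to be shown) or simply a re-citation of the same literature the proposition is quoted from. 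Moreover, the version of the support theorem you state --- uniform closure of paths built from the drift, the Gaussian directions, and finitely many jumps in $\supp(\measure)$ --- is not correct in the infinite-variation case: there the approximating finitely-many-jump paths must carry truncation-dependent compensating drifts $-t\int_{\{r\le\vert x\vert<1\}}x\,\measure(dx)$ which diverge as $r\downarrow 0$, and it is exactly this compensation phenomenon that makes the zero path accessible. A spectrally positive stable process of index $\alpha\in(1,2)$ illustrates the point: no fixed drift combined with finitely many positive jumps stays near zero, yet the proposition asserts $f(M,\epsilon)>0$ for all $M$ and $\epsilon$. So your proposal is a clean reduction of the proposition to its hard kernel, but the kernel remains unproved.
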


\begin{proposition}\label{proposition:proper-exit-after}
The condition that $\law_{a,b}^+[m,\infty)\land \law_{a,b}^-[m,\infty)>0$ whenever $\{a,b\}\subset (0,\infty)$ and $m\in [0,\infty)$, is equivalent to 
\begin{quote}
$\int 1\land \vert x\vert \measure(dx)<\infty$ and $((\drift=0)\land \nu\text{ charges }(0,\infty)\text{ and }(-\infty,0)\text{ both})\lor ((\drift>0)
\land (0\in \supp(\measure\vert_{(-\infty,0]})))\lor ((\drift <0)
\land(0\in \supp(\measure\vert_{[0,\infty)})))$; or $\diffusion>0$; or $\int 1\land\vert x\vert\measure(dx)=\infty$.
\end{quote}
\end{proposition}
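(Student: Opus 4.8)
The plan is to recognize that the displayed condition is exactly the conjunction of two criteria already available: the criterion of Proposition~\ref{proposition:non-one-sided-exit} (neither $X$ nor $-X$ is a subordinator) \emph{and} the criterion of Proposition~\ref{proposition:no-exit-in-bdd-time} (the path can remain near its start for a positive length of time). Outside the finite-variation driftless case both auxiliary criteria hold automatically, so I would first carry out a short bookkeeping check in the case $\diffusion=0$, $\int 1\land\vert x\vert\measure(dx)<\infty$: there the displayed three-fold disjunction coincides with this intersection, because the $\drift=0$ branch of Proposition~\ref{proposition:no-exit-in-bdd-time} must be cut down to $\measure$ charging both half-lines (otherwise $X$ or $-X$ is a subordinator), whereas the $\drift\ne 0$ branches already force negative (resp. positive) jumps accumulating at $0$ and hence automatically exclude $\pm$ a subordinator.

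For sufficiency I would fix $\{a,b\}\subset(0,\infty)$ and $m\in[0,\infty)$, choose $\epsilon\in(0,a\land b)$ and $M\ge m$, and set $A:=\{\overline{\vert X\vert}_M<\epsilon\}$. By Proposition~\ref{proposition:no-exit-in-bdd-time} one has $\PP(A)>0$, and on $A$ the path stays inside $(-\epsilon,\epsilon)\subset(-b,a)$ throughout $[0,M]$, so $T_{a,b}(X)>M\ge m$ there. Applying the Markov property at the deterministic time $M$, the post-$M$ increment process is an independent copy of $X$, so on $A$ the conditional probability that $X$ afterwards leaves $(-b,a)$ through the \emph{upper} boundary equals $\lambda^+_{a-X_M,\,b+X_M}[0,\infty)$, with $a-X_M>a-\epsilon>0$ and $b+X_M>b-\epsilon>0$; by Proposition~\ref{proposition:non-one-sided-exit} this is strictly positive for every admissible pair. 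Hence
\[
\lambda^+_{a,b}[m,\infty)\ \ge\ \EE\!\left[\mathbbm{1}_A\,\lambda^+_{a-X_M,\,b+X_M}[0,\infty)\right]\ >\ 0,
\]
and the same argument with ``lower'' in place of ``upper'' gives $\lambda^-_{a,b}[m,\infty)>0$.

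For necessity I would argue the contrapositive, splitting into the two ways the condition fails. If $X$ or $-X$ is a subordinator, then Proposition~\ref{proposition:non-one-sided-exit} already yields $\lambda^+_{a,b}[0,\infty)\land\lambda^-_{a,b}[0,\infty)=0$ for every $\{a,b\}$, so one tail vanishes for all $m$. The remaining case is that neither $X$ nor $-X$ is a subordinator yet the criterion of Proposition~\ref{proposition:no-exit-in-bdd-time} fails; up to $X\to-X$ this means $\diffusion=0$, $\int 1\land\vert x\vert\measure(dx)<\infty$, $\drift>0$, and $0\notin\supp(\measure\vert_{(-\infty,0]})$, while (as $X$ is not a subordinator) $\measure$ still charges $(-\infty,0)$; thus there is $\delta>0$ with $\measure(-\delta,0)=0$, i.e. every negative jump has magnitude $\ge\delta$. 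I would then take a narrow annulus with $a+b<\delta$ and show $T_{a,b}(X)\le a/\drift$ surely: on $\{T_{a,b}(X)>t\}$ the path lies in $(-b,a)$ on $[0,t]$, so no negative jump can have occurred (a jump of magnitude $\ge\delta>a+b$ from a pre-jump value in $[-b,a]$ lands below $-b$, forcing an immediate lower exit), whence the path is nondecreasing on $[0,t]$ and $X_t\ge\drift\, t$; together with $X_t<a$ this gives $t<a/\drift$. Consequently $\lambda^+_{a,b}[m,\infty)=\lambda^-_{a,b}[m,\infty)=0$ for any $m>a/\drift$, and the property fails.

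I expect the crux to be twofold. The first obstacle is conceptual, namely the bookkeeping identification of the displayed condition with ``not a (negative of a) subordinator'' \emph{and} the ability to linger; once this is in place, the two borrowed propositions make sufficiency essentially routine. The second, genuinely delicate point is the necessity in the finite-variation case: the observation that a jump gap near the origin, combined with a strict drift, forces the deterministic bound $T_{a,b}(X)\le a/\drift$ on a sufficiently narrow annulus, thereby annihilating \emph{all} late exits on both boundaries at once.
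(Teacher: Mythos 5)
Your proof is correct, and its sufficiency half is essentially the paper's own: the paper takes the event that $X$ stays inside $(-b,a)$ up to and including time $m$ (positive probability by Proposition~\ref{proposition:no-exit-in-bdd-time}), applies the Markov property there, and invokes Proposition~\ref{proposition:non-one-sided-exit} for the post-$m$ incremental process, which is exactly your argument up to the cosmetic replacement of $m$ by $M\ge m$ and of $(-b,a)$ by $(-\epsilon,\epsilon)$. Your opening ``bookkeeping'' step --- identifying the displayed condition with the conjunction of the conditions of Propositions~\ref{proposition:non-one-sided-exit} and~\ref{proposition:no-exit-in-bdd-time} --- is also what the paper exploits, only implicitly. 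The genuine difference lies in the necessity. The paper observes that the exit property (applied with $a=b=\epsilon$ and $m=M$) implies $\PP(\overline{\vert X\vert}_M<\epsilon)>0$ for all $M,\epsilon$, and then simply cites the necessity halves of Propositions~\ref{proposition:non-one-sided-exit} and~\ref{proposition:no-exit-in-bdd-time}. You instead argue the contrapositive and, in the one nontrivial case ($\diffusion=0$, $\int 1\land\vert x\vert\measure(dx)<\infty$, $\drift>0$, $\measure(-\delta,0)=0$ for some $\delta>0$, after the reduction $X\to -X$), prove directly that a narrow annulus with $a+b<\delta$ forces $T_{a,b}(X)\le a/\drift$ almost surely (``almost surely'' rather than ``surely'': the L\'evy--It\^o decomposition underlying your monotonicity claim holds almost surely), so that both $\law^+_{a,b}[m,\infty)$ and $\law^-_{a,b}[m,\infty)$ vanish once $m>a/\drift$. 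This is correct --- it is the same mechanism as the necessity part of Proposition~\ref{proposition:more-precise-two} --- and it buys independence from the ``only if'' direction of the borrowed Aurzada--Simon result, making that half of your proof self-contained, at the cost of redoing elementarily the special case of it that is actually needed; the paper's route is shorter but leans on the citation in both directions. Like the paper, you leave implicit the measurability of $(a',b')\mapsto\law^+_{a',b'}[0,\infty)$ needed to write the conditional probability as $\law^+_{a-X_M,\,b+X_M}[0,\infty)$; that is a shared, standard gloss, not a gap.
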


\begin{proof}
Since $\law_{a,b}^+[m,\infty)\land \law_{a,b}^-[m,\infty)>0$ whenever $\{a,b\}\subset (0,\infty)$ and $m\in (0,\infty)$, implies $\PP(\overline{\vert X\vert}_M< \epsilon)>0$ for all $\epsilon\in (0,\infty)$ and $M\in (0,\infty)$, necessity of the condition is clear from Propositions~\ref{proposition:non-one-sided-exit} and~\ref{proposition:no-exit-in-bdd-time}.

Sufficiency. Let $\{a,b\}\subset (0,\infty)$, $m\in [0,\infty)$. Let $A$ be the event that the process $X$ will stay within the annulus $(-b,a)$ up to and including time $m$. According to Proposition~\ref{proposition:no-exit-in-bdd-time}, $\PP(A)>0$. Let $B_a$ (respectively $B_b$) be the event that the incremental process of $X$ after $m$ will exit $(-b-X_m,a-X_m)$ in finite time at a level that is $\geq a-X_m$ (respectively $\leq -b-X_m$). By the Markov property, from Proposition~\ref{proposition:non-one-sided-exit}, and via a standard manipulation of conditional expectations, we obtain that $\PP(B_a\vert \FF_m)>0$ a.s. on $A$, hence $\PP(A\cap B_a)=\EE[\PP(B_a\vert \FF_m);A]>0$ and likewise for $A\cap B_b$. 
\end{proof}
Again the situation when $X$ satisfies the condition of Proposition~\ref{proposition:non-one-sided-exit} but not that of Proposition~\ref{proposition:proper-exit-after} can  (up to the trivial transformation $X\rightarrow -X$) rather easily be made more precise (in particular, in Proposition~\ref{proposition:proper-exit-after}, we cannot change the qualification ``whenever $\{a,b\}\subset (0,\infty)$ and $m\in [0,\infty)$'' to ``for some (then all)  $\{a,b\}\subset (0,\infty)$, $m\in [0,\infty)$'' or even to ``for all $m\in [0,\infty)$, for some (then all)  $\{a,b\}\subset (0,\infty)$''):
\begin{proposition}\label{proposition:more-precise-two}
Suppose $\int 1\land\vert x\vert\measure(dx)<\infty$, $\diffusion=0$,  $\drift>0$, $0\notin \supp(\measure\vert_{(-\infty,0]})$, 
but $\nu(-\infty,0)\ne 0$. Let  $W:=-\sup \supp(\nu\vert_{(-\infty,0]})$.  Let furthermore $m\in [0,\infty)$, $\{a,b\}\subset (0,\infty)$. Then $\law^+_{a,b}[m,\infty)\land \law^-_{a,b}[m,\infty)>0$, if and only if $m\gamma_0 < a$ or $a+b>W$.
\end{proposition}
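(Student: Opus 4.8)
The plan is to work throughout with the finite-variation decomposition $X_t=\drift t+S_t-D_t$, where $S$ collects the positive jumps (a driftless subordinator, since $\int_{(0,\infty)}1\land x\,\measure(dx)<\infty$) and $D$ collects the negative jumps (a compound Poisson process all of whose jumps have magnitude $\ge W$, of finite intensity $\mu\defeq\measure(-\infty,-W]<\infty$, the finiteness owing to $0\notin\supp(\measure|_{(-\infty,0]})$). Writing $U_t\defeq\drift t+S_t$ for the nondecreasing upward part, I note the two facts that drive everything: $U_t\ge\drift t$, so $U$ crosses $a$ at a time $\tau_a\le a/\drift$; and $X$ can \emph{decrease} only at jumps of $D$. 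Necessity and sufficiency are then treated separately, the latter split according to which of the disjuncts $\drift m<a$, $a+b>W$ holds.

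For necessity I would show that if $\drift m\ge a$ and $a+b\le W$ then $\law^-_{a,b}[m,\infty)=0$. A lower exit can occur only at a jump of $D$, and before the first such jump $X=U\in[0,a)$. When $a+b\le W$, from any level in $(-b,a)$ a downward jump of magnitude $\ge W\ge a+b$ already lands below $-b$; hence on the event of a lower exit the \emph{first} jump of $D$ produces it, so the lower-exit time equals the first jump time of $D$, which necessarily precedes $\tau_a\le a/\drift\le m$. Thus any lower exit happens strictly before $m$, so $\law^-_{a,b}[m,\infty)=0$ and the product vanishes.

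For sufficiency when $\drift m<a$, the upper exit is obtained by intersecting the independent events $\{\text{no jump of }D\text{ in }[0,a/\drift]\}$ (probability $e^{-\mu a/\drift}>0$) and $\{S_m<a-\drift m\}$ (positive, since $0=\inf\supp(S_m)$ for the driftless subordinator $S$): on their intersection $X=U$ up to $a/\drift$ with $U_m<a$, so $U$ first crosses $a$ at some $\tau_a\in(m,a/\drift]$, giving an upper exit at time $\ge m$, i.e.\ $\law^+_{a,b}[m,\infty)>0$. For the lower exit (needed only when $a+b\le W$, the complementary case being handled below) I would instead place the first jump of $D$ in $[m,(m+a/\drift)/2]$ (probability $e^{-\mu m}-e^{-\mu(m+a/\drift)/2}>0$) and again ask $S$ to be small on that interval, so that $X=U<a$ right up to that jump, which then carries $X$ below $-b$ because $a+b\le W$; hence $\law^-_{a,b}[m,\infty)>0$.

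For sufficiency when $a+b>W$ I would mimic the proof of Proposition~\ref{proposition:proper-exit-after}: it suffices to show $\PP(T_{a,b}(X)>m)>0$, since then, conditioning on $\FF_m$ and applying the strong Markov property with Proposition~\ref{proposition:non-one-sided-exit} to the shifted annulus $(-b-X_m,a-X_m)$, both boundaries are reached by the incremental process after $m$ with positive conditional probability, giving $\law^+_{a,b}[m,\infty)\land\law^-_{a,b}[m,\infty)>0$. To obtain $\PP(T_{a,b}(X)>m)>0$ I would run a saw-tooth/renewal: since $a+b>W$ and $-W\in\supp(\measure|_{(-\infty,0]})$, one may fix $W'\in(W,a+b)$ with $\measure([-W',-W])>0$, and because $a-W'>-b$ a negative jump of magnitude in $[W,W']$ taken from a level in the launch window $(W'-b,a)$ keeps $X$ inside $(-b,a)$; over a fixed time $t_0$ the process is made to drift into this window and take exactly one such jump, returning to a control interval, and concatenating $\lceil m/t_0\rceil$ teeth via the Markov property yields a lower bound $p_0^{\lceil m/t_0\rceil}>0$. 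The main obstacle is exactly this renewal estimate: a single tooth must simultaneously (i) keep the upward excursion below $a$, (ii) prevent the forced downward jump of size $\ge W$ from overshooting $-b$ — which is what $a+b>W'$ buys by timing the jump near the top of the window — and (iii) land back in a fixed control interval, all with probability bounded below uniformly in the random starting level, the delicacy being the possible infinite activity of the positive jumps, which I would control throughout via $\inf\supp(S_t)=0$ rather than by excluding positive jumps.
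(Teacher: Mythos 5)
Your necessity argument and your treatment of the case $\drift m<a$ are correct and complete: the observation that a lower exit must occur at the first jump of $D$, which (a.s.) happens strictly before $\tau_a\le a/\drift\le m$, settles necessity (the coincidence $\tau_1=\tau_a$ is a null event, worth a word), and your two explicit events give both $\law^+_{a,b}[m,\infty)>0$ and $\law^-_{a,b}[m,\infty)>0$ directly — in fact slightly more economically than the paper, which in both sufficiency cases only shows $\PP(T_{a,b}(X)>m)>0$ and then invokes the Markov property at time $m$ together with Proposition~\ref{proposition:non-one-sided-exit}.

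The gap is in the case $a+b>W$, exactly at the step you flag as ``the main obstacle,'' and your sketch does not close it; moreover it misdiagnoses the difficulty. In a tooth of fixed duration $t_0$ the terminal position is $x+\drift t_0+S_{t_0}-J$, and this does not depend on \emph{when} the forced jump occurs; so ``landing back in a fixed control interval'' is a constraint on $J$ and $S_{t_0}$ only, about whose laws you know nothing beyond $-W\in\supp(\measure\vert_{(-\infty,0]})$ and $0\in\supp(\mathrm{law}(S_{t_0}))$. Consequently the per-tooth displacement has an uncontrollable distribution, and the return probability cannot be bounded below uniformly over starting levels in a fixed interval: e.g.\ if $\measure$ charges no set in $(0,\infty)$ and is non-atomic near $-W$, then with $\drift t_0=W$ the displacement is $-(J-W)<0$ a.s., so from the left endpoint of the control interval the return probability is $0$ (and degenerates continuously nearby); replacing $W$ by $W+\alpha$ helps only if $\alpha$ is chosen as an atom or two-sided accumulation point of the law of $J-W$ — an extra argument you would still have to supply. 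The positive jumps, which you single out as the delicacy, are in fact the easy part (condition on $S_{t_0}\le\delta$, independently). Two ways to repair the argument: (a) renew at \emph{stopping} times, namely first passage above a fixed level $c'$ after the forced jump; then the overshoot is at most $\delta$, so the starting zone $[c',c'+\delta]$ is genuinely reset and every per-tooth requirement becomes a condition on the increments alone, with a positive lower bound independent of the starting level; or (b) do what the paper does and dispense with renewal and uniformity altogether: prescribe in one shot the entire jump pattern of the compound Poisson part on $[0,m]$ — no jump on $[0,a/\drift-\gamma]$, then $n\defeq\lceil m\drift/W\rceil$ cycles consisting of exactly one jump of size in $(-W-\delta,-W]$ within a window of length $\gamma/K$ followed by a quiet period of length $W/\drift$ — together with $Y_m\le\delta$ for the positive-jump part. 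That event is an intersection of finitely many \emph{independent} events of positive probability, so no uniform estimate is needed, and the cumulative errors (timing slack of order $n\drift\gamma/K$, jump-size excess of order $n\delta$) are absorbed into the fixed room $a+b-W>0$ by choosing $\gamma$ small, then $\delta$ small and $K$ large, as functions of the predetermined $n$. As submitted, your proof of the $a+b>W$ case is a plan with its crucial estimate unproven, and the route you indicate for proving it would fail without a further idea.
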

\begin{proof}
Note $\mathbbm{1}_{(-\infty,0)}\cdot \nu$ is finite.

The condition is necessary, since its falsity necessitates $X$ a.s. never exiting $(-b,a)$ for the first time, after and inclusive of time $m$, at the lower boundary.

Sufficiency. Under the assumed condition, with a positive probability, $X$ will not exit $(-b,a)$ up to time (inclusive of) $m$, which fact we can argue as follows. 

Assume first $a+b> W$. According to the L\'evy-It\^o decomposition we can decompose $X$ into the independent sum of a pure drift process with drift coefficient $\gamma_0$, a pure-jump subordinator $Y$ and a compound Poisson process $Z$ with L\'evy measure $\mathbbm{1}_{(-\infty,0)}\cdot \nu$. Let $\{\delta,\gamma\}\in (0,\infty)$, $K\in \mathbb{N}$. With a positive probability $Y$ will not increase by more than $\delta$ by time (inclusive of) $m$. Independently $Z$ will not have a jump during the time interval $[0,a/\gamma_0-\gamma]$, and then $\lceil m/(W/\gamma_0)\rceil$ successive times will behave as follows: have precisely one jump into $(-W-\delta,-W)$ during the next (left-open, right-closed) interval of length $\gamma/K$, and then will not jump for the next (left-open, right-closed) interval of time of length $W/\gamma_0$. It is clear that thanks to $W<a+b$, $\gamma$ can be chosen small enough, and then $\delta$ small enough and $K$ large enough, that on the described event of positive probability $X$ remains in $(-b,a)$ up to time (inclusive of) $m$.



Now assume $m\gamma_0<a$. With a positive probability $X$ will have no negative jump up to time $m$ inclusive, whilst independently and with a positive probability the part of $X$ consisting of the positive jumps of $X$ only, will be found in $[0,a-m\gamma_0)$ at time $m$. 

In either case $X$ remains inside the annulus  $(-b,a)$ up to time (inclusive of) $m$ with a positive probability. Moreover, by the Markov property and from Proposition~\ref{proposition:non-one-sided-exit}, on this event, conditionally on $\mathcal{F}_m$, the incremental process of $X$ after $m$ will exit $(-b-X_m,a-X_m)$ in finite time with a positive probability at a level $\geq a-X_m$ and with a positive probability also at a level $\leq -b-X_m$ (cf. proof of Proposition~\ref{proposition:proper-exit-after}). This demonstrates that in fact $\law^+_{a,b}[m,\infty)\land \law^-_{a,b}[m,\infty)>0$.
\end{proof}

\begin{corollary}\label{corollary:proper-exit-full-support}
The condition that $\law_{a,b}^+[m,M)\land \law_{a,b}^-[m,M)>0$, whenever $\{a,b\}\subset (0,\infty)$, $m\in [0,\infty)$, $M\in (0,+\infty]$, $m<M$, is equivalent to 
\begin{quote}
$\int 1\land \vert x\vert \measure(dx)<\infty$ and $\nu\text{ charges }(0,\infty)\text{ and }(-\infty,0)\text{ both}$ and $(\drift=0)\lor ((\drift>0)
\land (0\in \supp(\measure\vert_{(-\infty,0]})))\lor ((\drift <0)\land (0\in \supp(\measure\vert_{[0,\infty)})))$; or $\diffusion>0$; or $\int 1\land\vert x\vert\measure(dx)=\infty$.
\end{quote}
\end{corollary}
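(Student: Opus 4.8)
The plan is to recognise the displayed condition as precisely the conjunction of the conditions of Propositions~\ref{proposition:proper-exit-before} and~\ref{proposition:proper-exit-after}, and then to reduce the two-sided window statement to those two results together with the no-small-time-exit estimate of Proposition~\ref{proposition:no-exit-in-bdd-time}. As a preliminary reduction I would verify, by a routine case split on whether $\diffusion>0$, whether $\int 1\land\vert x\vert\measure(dx)=\infty$, or else $\diffusion=0$ with $\int 1\land\vert x\vert\measure(dx)<\infty$, that the corollary's condition holds if and only if both the condition of Proposition~\ref{proposition:proper-exit-before} and that of Proposition~\ref{proposition:proper-exit-after} hold. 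In the only nontrivial (finite-variation, $\diffusion=0$) regime this amounts to noting that ``$\nu$ charges both half-lines'' (from the former) together with the drift/support trichotomy (from the latter) is the same as ``$\nu$ charges both half-lines and $(\drift=0)\lor\dots$'', since once $\nu$ charges both half-lines the disjunct $(\drift=0)\land(\nu\text{ charges both})$ collapses to $\drift=0$.

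For necessity, assuming $\law_{a,b}^+[m,M)\land\law_{a,b}^-[m,M)>0$ for all admissible $\{a,b\}$, $m$, $M$, I would specialise $m=0$ to recover $\law^\pm_{a,b}[0,M)>0$ for all $M\in(0,\infty]$, hence the condition of Proposition~\ref{proposition:proper-exit-before}, and specialise $M=\infty$ to recover $\law^\pm_{a,b}[m,\infty)>0$ for all $m\in[0,\infty)$, hence the condition of Proposition~\ref{proposition:proper-exit-after}. By the preliminary reduction, their conjunction is the displayed condition.

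For sufficiency, assume the displayed condition, so that both Propositions~\ref{proposition:proper-exit-before} and~\ref{proposition:proper-exit-after}, and \emph{a fortiori} the hypothesis of Proposition~\ref{proposition:no-exit-in-bdd-time}, are in force. Fix $\{a,b\}\subset(0,\infty)$ and $m<M$. The case $M=\infty$ is exactly Proposition~\ref{proposition:proper-exit-after} and the case $m=0$ is exactly Proposition~\ref{proposition:proper-exit-before}, so I would assume $0<m<M<\infty$. Choosing $\epsilon\in(0,\min\{a,b\})$, Proposition~\ref{proposition:no-exit-in-bdd-time} gives $\PP(A)>0$ for the event $A:=\{\overline{\vert X\vert}_m<\epsilon\}$, on which $X$ has not left $(-b,a)$ by time $m$ and $\vert X_m\vert<\epsilon$. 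By the Markov property, on $A$ the post-$m$ increment is a fresh copy of $X$, and for $X$ to exit $(-b,a)$ for the first time after $m$ this increment must exit the shifted annulus $(-b-X_m,a-X_m)$, whose two (random) endpoints both have absolute value exceeding $\min\{a,b\}-\epsilon>0$; an upper exit of the increment within time $M-m$ corresponds to $T_{a,b}(X)\in[m,M)$ with $X_{T_{a,b}(X)}\geq a$. Applying Proposition~\ref{proposition:proper-exit-before} to this fresh copy with horizon $M-m>0$ and the $\FF_m$-measurable annulus, via the same conditional-expectation manipulation as in the proof of Proposition~\ref{proposition:proper-exit-after}, yields that the $\FF_m$-conditional probability of such an upper (respectively lower) exit is a.s.\ positive on $A$; hence $\law^+_{a,b}[m,M)\geq\EE[\PP(\text{upper exit within }M-m\mid\FF_m);A]>0$, and symmetrically $\law^-_{a,b}[m,M)>0$.

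The step I expect to be the main obstacle is this conditional argument: one must handle the random, $\FF_m$-measurable annulus $(-b-X_m,a-X_m)$ and argue that the conditional exit probability is a.s.\ positive on $A$. This is where the full strength of the ``for all $\{a,b\}$'' conclusion of Proposition~\ref{proposition:proper-exit-before} is needed (positivity for \emph{every} deterministic annulus, applied pointwise in the value of $X_m$), and it is the only place requiring genuine care; everything else is bookkeeping already carried out in the preceding proofs.
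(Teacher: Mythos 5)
Your proof is correct and takes essentially the same approach as the paper's (one-line) proof: combine Proposition~\ref{proposition:proper-exit-before} with the no-exit-in-bounded-time result of Proposition~\ref{proposition:no-exit-in-bdd-time}, and handle sufficiency via the Markov property and the conditional-expectation manipulation from the proof of Proposition~\ref{proposition:proper-exit-after}, applied to the $\FF_m$-measurable shifted annulus. The only cosmetic difference is that your necessity argument routes through Proposition~\ref{proposition:proper-exit-after} (specialising $M=\infty$) rather than invoking Proposition~\ref{proposition:no-exit-in-bdd-time} directly, which is interchangeable since the necessity part of Proposition~\ref{proposition:proper-exit-after} itself rests on Proposition~\ref{proposition:no-exit-in-bdd-time}.
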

\begin{proof}
One combines Propositions~\ref{proposition:proper-exit-before} and~\ref{proposition:no-exit-in-bdd-time}, using the Markov property in the sufficiency part (similarly as it was used in the proof of Proposition~\ref{proposition:proper-exit-after}). 
\end{proof}
It does not appear the situation in which $X$ satisfies the condition of Proposition~\ref{proposition:non-one-sided-exit} but not of Corollary~\ref{corollary:proper-exit-full-support} can be readily described (in a concise manner).

\begin{remark}
One verifies at once that no two sets of conditions, characterizing the situations present in  Propositions~\ref{proposition:non-one-sided-exit},~\ref{proposition:proper-exit-before}, and~\ref{proposition:proper-exit-after}, and in Corollary~\ref{corollary:proper-exit-full-support} are equivalent in general.
\end{remark}
\bibliographystyle{amsplain}
\bibliography{Biblio_proper}
\end{document}